\newtheorem{ut}{Theorem}
\newtheorem*{utm}{Theorem}
\newtheorem{ul}[ut]{Lemma}
\newtheorem*{uq}{Question}
\theoremstyle{remark}
\newtheorem*{ur}{Remark}
\begin{document}

\title{Large widely-connected spaces}
\subjclass[2010]{54F15, 54A25, 54G15} 
\keywords{widely-connected, cardinality, indecomposable, composant}
\author{David Lipham}
\address{Department of Mathematics and Statistics, Auburn University, Auburn,
AL 36849, United States}
\email{dsl0003@auburn.edu}

\begin{abstract}We show that  there are widely-connected spaces of arbitrarily large cardinality, answering a question by David  Bellamy.  \end{abstract}

\maketitle

A connected space $W$ is said to be \textit{widely-connected} if every non-degenerate connected subset of $W$ is dense in $W$. %The term was originally  defined by Paul Swingle \cite{3}, who constructed the first example.  
In \cite{1}, David Bellamy asked whether there are widely-connected sets of arbitrarily large cardinality.  We prove the following.
%In this paper we use the main results of [] and [] to prove the following.

\begin{utm}\label{main}For each infinite cardinal $\kappa$ there is a completely regular widely-connected space of size $2^\kappa$.\end{utm}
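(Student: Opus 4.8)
The plan is to realize $W$ as a selector inside a suitable indecomposable continuum, pushing the classical Swingle-type construction up to large cardinals. Let $M$ be an indecomposable continuum; recall that its composants are pairwise disjoint, dense, connected sets covering $M$, that every proper subcontinuum of $M$ lies in a single composant, and hence that any subcontinuum meeting two composants equals $M$. I will choose $W\subseteq M$ meeting each composant in at most one point. Then any nondegenerate connected $C\subseteq W$ contains two points in distinct composants, so $\overline{C}^{\,M}$ is a subcontinuum meeting two composants and therefore equals $M$; thus $C$ is dense in $M$ and a fortiori dense in $W$. Consequently, once $W$ is shown to be connected it is automatically widely-connected, and as a subspace of a compact Hausdorff space it is completely regular.

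To force connectedness I will use the criterion that a \emph{dense} subset of a continuum meeting every closed separator is connected. Indeed, if $W=P\sqcup Q$ separated a dense $W$, then $S=\overline{P}^{\,M}\cap\overline{Q}^{\,M}$ would be a nonempty closed set disjoint from $W$ with $M\setminus S=(M\setminus\overline{P}^{\,M})\sqcup(M\setminus\overline{Q}^{\,M})$, i.e. a separator avoiding $W$. The enabling fact, valid in every indecomposable continuum, is that every closed separator $S$ meets every composant: a composant $C$ is dense and connected, so it meets both nonempty open pieces of $M\setminus S$ and hence cannot avoid $S$. The same density shows that every nonempty open set meets every composant. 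Thus at any stage at which fewer than the full supply of composants has been used, the prescribed separator (or basic open set) still meets a previously unused composant.

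This suggests the following transfinite recursion, under the hypothesis that $M$ has weight at most $\kappa$. A base of size at most $\kappa$ yields at most $2^\kappa$ open sets, hence at most $2^\kappa$ closed separators, while $|M|\le 2^\kappa$. Enumerate, in order type at most $2^\kappa$, all closed separators together with all basic open sets, and pad to length exactly $2^\kappa$. At stage $\xi<2^\kappa$ fewer than $2^\kappa$ composants have been used; since the prescribed set meets every composant and $M$ has $2^\kappa$ composants, I pick $w_\xi$ lying in that set and in a fresh composant (a pure padding stage simply takes $w_\xi$ in any unused composant). Setting $W=\{w_\xi:\xi<2^\kappa\}$, by construction $W$ meets each composant at most once, $W$ is dense, and $W$ meets every closed separator; so $W$ is a connected, hence widely-connected, completely regular space of cardinality $2^\kappa$.

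All of the above rests on one nontrivial ingredient, which I expect to be the main obstacle: the existence, for each infinite $\kappa$, of an indecomposable continuum of weight at most $\kappa$ carrying the maximal number $2^\kappa$ of composants. Weight at most $\kappa$ is precisely what caps the number of closed separators, and thus the recursion length, at $2^\kappa$; having $2^\kappa$ composants is precisely what keeps a fresh composant available at every stage and delivers cardinality $2^\kappa$. Since $|M|\le 2^\kappa$, both demands sit simultaneously at the extreme of what is possible, so $M$ must be built deliberately. I would produce $M$ from an inverse-limit (solenoid-type) system of weight $\kappa$, or from a Stone–Čech–remainder–type continuum adapted to $\kappa$, and verify indecomposability and the composant count there; this construction, rather than the selection argument, is where the genuine difficulty lies.
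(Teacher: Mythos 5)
Your selection argument is correct and is essentially the paper's Lemma~1: a subset of an indecomposable continuum $M$ that is dense, meets every closed separator, and meets each composant at most once is widely-connected, and the weight bound $w(M)\le\kappa$ caps the number of closed sets (hence separators) at $2^\kappa$, which is the paper's Lemma~2. The paper packages your transfinite recursion as a single application of the Axiom of Choice --- a surjection $\varPsi$ from the $2^\kappa$ composants onto the separators with $\psi(C)\in C\cap\varPsi(C)$ --- and gets density without enumerating basic open sets, by noting that $\partial V$ is itself a separator for suitable open $V$; these are cosmetic differences, and your verification of connectedness, wide-connectedness, cardinality, and complete regularity all match.

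The genuine gap is the ingredient you defer: the existence, for each infinite $\kappa$, of an indecomposable continuum of weight $\le\kappa$ with $2^\kappa$ composants. This is precisely Theorem~2 of Smith's paper (reference [2] here), where $M=\varprojlim\{M_\alpha:\alpha<\kappa\}$ with each $M_\alpha$ a subcontinuum of $[0,1]^{\alpha+1}$; without it your proof does not close, and it is not a routine verification --- Mazurkiewicz's ``$2^\omega$ composants'' theorem is genuinely metric and fails badly in general. Moreover, one of your two proposed routes is a dead end: a Stone--\v{C}ech-remainder-type continuum cannot work, since such remainders have weight at least $2^\omega$ (so never weight $\le\kappa$ when $\kappa=\omega$, where weight $\le\omega$ forces metrizability), and their composant count is not controlled in ZFC --- the paper itself notes that the remainder of $[0,\infty)$ is \emph{consistently} an indecomposable continuum with only one composant (Mioduszewski), and that Bellamy built indecomposable continua with one and two composants outright. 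So largeness and indecomposability alone do not deliver $2^\kappa$ composants; the inverse-limit (solenoid-type) route you mention is the correct one, but it must actually be carried out (or cited), as the paper does via Smith.
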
  

%\noindent 

In \cite{3}, Paul Swingle showed how to construct widely-connected subsets of metric indecomposable continua (a \textit{continuum} is a connected compact Hausdorff space, and if $X$ is a continuum then $X$ is \textit{indecomposable} if $X$ is not the union of two proper subcontinua), and in \cite{2}, Michel Smith showed that there are indecomposable continua of arbitrarily large cardinality. Swingle's method is generalized by Lemma \ref{dwc} of this paper, so that it can be applied to Smith's non-metric constructions.

To prove Lemma \ref{dwc} we will use some basic properties of  composants. If $X$ is a continuum then $C$ is a \textit{composant} of $X$ if there exists $x\in X$ such that $C$ is the union of all proper subcontinua of $X$ which contain $x$. Two standard results of continuum theory are (1) if $X$ is a continuum then each composant of $X$ is connected and dense in $X$, and (2) the composants of an indecomposable continuum are pairwise disjoint.  For proofs, see \cite{kur} \S 48 VI.

%abs??

\begin{ul}\label{dwc} Let $X$ be an indecomposable continuum. Let  $\mathscr C(X)$ be the set of composants of $X$, and let $$\mathscr S(X) =\{S\subseteq X:S\text{ is closed and }X\setminus S \text{ is not connected}\}.$$ If $|\mathscr S(X)|\leq |\mathscr C(X)|$ then $X$ has a dense widely-connected subspace of size $|\mathscr C(X)|$.\end{ul}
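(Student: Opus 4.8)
The plan is to construct the desired subspace $W$ by transfinite recursion of length $\kappa:=|\mathscr C(X)|$, arranging three things simultaneously: that $W$ meets each composant in at most one point, that $W$ meets every $S\in\mathscr S(X)$, and that $|W|=\kappa$. I claim these conditions already force $W$ to be a dense widely-connected subspace of size $\kappa$, so the real content lies only in showing the recursion can be carried out.

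First I would record two structural facts about the indecomposable continuum $X$. (i) Every proper subcontinuum of $X$ lies in a single composant: if $K\subsetneq X$ is a subcontinuum containing $x$, then $K$ is one of the proper subcontinua whose union defines the composant of $x$, and since composants are pairwise disjoint, $K$ cannot meet two of them. (ii) Every $S\in\mathscr S(X)$ meets every composant: writing $X\setminus S=U\cup V$ with $U,V$ disjoint nonempty open, a composant $C$ disjoint from $S$ would be a connected subset of $U\cup V$, hence contained in $U$ or in $V$; but $C$ is dense, so $C$ meets both $U$ and $V$, a contradiction.

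Next I would prove the equivalence that makes ``meeting every separator'' do all the work. Suppose $W$ meets every $S\in\mathscr S(X)$. Then $W$ is dense: given nonempty open $G\ne X$, regularity of the compact Hausdorff space $X$ yields a nonempty open $G'$ with $\overline{G'}\subseteq G\subsetneq X$, so the boundary $\partial G'$ is closed with $X\setminus\partial G'=G'\cup(X\setminus\overline{G'})$ a separation into nonempty open pieces; thus $\partial G'\in\mathscr S(X)$, whence $W$ meets $\partial G'\subseteq G$. And $W$ is connected: a separation $W=A\cup B$ would, since $W$ is now known to be dense, give $\overline A\cup\overline B=X$, so with $S:=\overline A\cap\overline B$ the sets $X\setminus\overline B\supseteq A$ and $X\setminus\overline A\supseteq B$ form a separation of $X\setminus S$; hence $S\in\mathscr S(X)$, yet $W\cap S=\emptyset$, contradicting the hypothesis. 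Finally, $W$ is widely-connected once it meets each composant at most once: any nondegenerate connected $C\subseteq W$ has $\overline C$ a subcontinuum, which by (i) cannot be proper (else $C$ sits inside one composant and has at most one point), so $\overline C=X$ and $C$ is dense in $X$, hence in $W$.

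It remains to run the recursion. Using $|\mathscr S(X)|\le\kappa$, enumerate $\mathscr S(X)=\{S_\alpha:\alpha<\kappa\}$ (with repetitions if necessary). At stage $\alpha$ at most $|\alpha|<\kappa$ composants have been used, while by (ii) $S_\alpha$ meets all $\kappa$ composants, so some composant $\Gamma$ meeting $S_\alpha$ is still unused; choose $w_\alpha\in S_\alpha\cap\Gamma$. This keeps at most one point in each composant and guarantees that $W=\{w_\alpha:\alpha<\kappa\}$ meets every separator; padding the remaining stages (if $|\mathscr S(X)|<\kappa$) with points drawn from fresh composants secures $|W|=\kappa$ without disturbing the other properties. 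The main obstacle is the apparent circularity between density and connectedness in the equivalence above — each seems to presuppose the other — and the crux is the observation that the boundary of a small open set is itself a separator contained in any prescribed open set, so that the single requirement ``$W$ meets every $S\in\mathscr S(X)$'' delivers density and connectedness at once; the hypothesis $|\mathscr S(X)|\le|\mathscr C(X)|$ together with fact (ii) is precisely what lets this requirement be met while preserving the one-point-per-composant property.
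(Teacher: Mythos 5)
Your proposal is correct and takes essentially the same route as the paper: a set with one point per composant arranged to meet every closed separator, with density, connectedness, and wide-connectedness verified by the same three arguments (the boundary of a shrunken open set is a separator inside any given open set; a separation of dense $W$ yields the separator $\overline{A}\cap\overline{B}$ missed by $W$; any proper subcontinuum lies in a single composant, which $W$ meets at most once). The only difference is bookkeeping: you build $W$ by transfinite recursion along an enumeration of $\mathscr S(X)$, choosing a fresh composant at each stage, whereas the paper simply fixes a surjection $\varPsi:\mathscr C(X)\to\mathscr S(X)$ and chooses $\psi(C)\in C\cap\varPsi(C)$ for every composant $C$ at once --- an immaterial variation in how the choice is organized.
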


\begin{proof}Write $\mathscr S$ for $\mathscr S (X)$ and $\mathscr C$ for $\mathscr C (X)$. Since each member of $\mathscr C$ is connected and dense in $X$, we have $C\cap S\neq\varnothing$ for each $C\in\mathscr C$ and $S\in\mathscr S$. Let $\varPsi:\mathscr C \to \mathscr S$ be a surjection, and for each $C\in\mathscr C$ let $\psi(C)\in C\cap \varPsi(C)$. Let $W=\{\psi(C):C\in\mathscr C\}$. This construction is made possible by the Axiom of Choice. Note that $|W|=|\mathscr C|$ since the members of $\mathscr C$ are pairwise disjoint. 

Let $U$ be a nonempty open subset of $X$.  Let $V\subseteq X$ be nonempty and open such that $\overline V\subseteq U$ and $\overline V \neq X$. Then $\partial V\in \mathscr S$. There exists $C\in\mathscr C$ such that $\varPsi(C)=\partial V$. Then $\psi(C)\in W\cap \overline V\subseteq W\cap U$.  We have proven that $W$ is dense in $X$. Supposing that $W$ is not connected, there are nonempty open subsets $U$ and $V$ of $X$ such that $U\cap W\neq \varnothing$, $V\cap W\neq\varnothing$,  and $W=(U\cap W)\cup (V\cap W)$. By density of $W$ we have $U\cap V=\varnothing$. So $X\setminus (U\cup V)\in \mathscr S$. By definition $W\cap [X\setminus (U\cup V)]\neq \varnothing$, contrary to $W\subseteq U\cup V$.  Thus $W$ is connected. If $A$ is a non-dense  connected subset of $W$, then $\overline A$ is a proper subcontinuum of $X$ and  is therefore contained in a composant $C$.  By the definition of $W$ and the fact that the composants of $X$ are pairwise disjoint, $\abs{W\cap C}=1$, so $A$ is degenerate. Therefore $W$ is \textit{widely}-connected. \end{proof}

\begin{ur}The assumption $|\mathscr S(X)|\leq |\mathscr C(X)|$ is actually equivalent to $|\mathscr S(X)|= |\mathscr C(X)|$, and so $\varPsi$ can be taken to be a bijection.  Indeed,  let $\lambda=|\mathscr C (X)|$ and let $U$ and $V$ be nonempty open subsets of $X$ with $\overline U\cap \overline V=\varnothing$. $|X|\geq \lambda$ implies that  $|X\setminus \partial U|\geq \lambda$ or $|X\setminus \partial V|\geq \lambda$. Assuming that $|X\setminus \partial U|\geq \lambda$,  let $\mathscr S'(X)=\{\partial U \cup \{x\}:x\in X\setminus \partial U\}$.  Then $\mathscr S'(X)\subseteq \mathscr S (X)$ implies $|\mathscr S(X)|\geq\lambda$.\end{ur}

%\

\begin{ul}\label{r}Let $\kappa$ be an infinite cardinal.  If $\{X_\alpha:\alpha<\kappa\}$ is a collection of spaces each with a basis of  size $\leq \kappa$, then $\prod _{\alpha<\kappa} X_\alpha$ and $$X:=\varprojlim \{X_\alpha:\alpha<\kappa\}$$   have bases of size $\leq \kappa$, and $X$ has $\leq 2^\kappa$ closed subsets.\end{ul}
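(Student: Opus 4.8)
The plan is to establish the three claims in sequence, deriving the two inverse-limit statements from the count for the product. Fix, for each $\alpha<\kappa$, a basis $\mathscr B_\alpha$ of $X_\alpha$ with $|\mathscr B_\alpha|\leq\kappa$, and set $P:=\prod_{\alpha<\kappa}X_\alpha$.

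First I would exhibit and count the canonical basis of $P$. The standard basis for the product topology consists of the sets $\bigcap_{\alpha\in F}\pi_\alpha^{-1}(U_\alpha)$ where $F\subseteq\kappa$ is \emph{finite} and $U_\alpha\in\mathscr B_\alpha$ for each $\alpha\in F$. Such a basic set is coded by a finite partial function from $\kappa$ into $\bigcup_\alpha\mathscr B_\alpha$, so the number of them is at most the number of finite subsets of $\kappa$ times the number of coordinatewise assignments, i.e.\ at most $\sum_{n<\omega}\kappa^{n}=\kappa$, using that $\kappa$ is infinite. Hence $P$ has a basis of size $\leq\kappa$.

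Second, the inverse limit $X=\varprojlim\{X_\alpha:\alpha<\kappa\}$ is by definition the subspace of $P$ consisting of the threads compatible with the bonding maps. A subspace inherits a basis of size no larger than that of the ambient space, obtained by intersecting each ambient basic open set with $X$. Therefore $X$ also has a basis of size $\leq\kappa$. Third, to bound the closed subsets I would pass to open sets and take complements: if $\mathscr B$ is a basis of size $\leq\kappa$ for $X$, then every open set is a union of members of $\mathscr B$, so the map $\mathscr A\mapsto\bigcup\mathscr A$ from subfamilies $\mathscr A\subseteq\mathscr B$ onto the open sets is surjective. Thus $X$ has at most $2^{|\mathscr B|}\leq 2^\kappa$ open sets, and taking complements it has at most $2^\kappa$ closed sets.

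The only delicate point is the cardinality count in the first step: one must use that basic open sets in the \emph{product} topology have finite support, since allowing infinite support (as in the box topology) would inflate the count to $\kappa^\kappa=2^\kappa$. Everything else is routine cardinal arithmetic with $\kappa$ infinite, together with the elementary facts that passing to a subspace does not increase weight and that the number of open (hence closed) sets is bounded by $2$ raised to the weight.
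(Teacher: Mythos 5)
Your proposal is correct and takes essentially the same route as the paper's proof: the same finite-support coding of the canonical product basis (the paper uses $f\in\big[\bigcup_{\alpha<\kappa}\{\alpha\}\times\mathscr B_\alpha\big]^{<\omega}$, giving $\kappa^{<\omega}=\kappa$), the same passage to the inverse limit as a subspace with basis $\{B\cap X:B\in\mathscr B\}$, and the same surjection from subfamilies of the basis onto the topology to bound the number of open, hence closed, sets by $2^\kappa$. Your closing remark about finite versus box-topology support is exactly the point the paper's coding enforces.
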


\begin{proof}For each $\alpha<\kappa$ let $\mathscr B_\alpha$ be a basis for $X_\alpha$ with $\abs{ \mathscr B_\alpha}\leq \kappa$. For each  $f\in \big[\bigcup _{\alpha<\kappa} \{\alpha\}\times \mathscr B_\alpha \big]^{<\omega}$ let $U_f=\{x\in \prod _{\alpha<\kappa}X_\alpha:x(f(i)(0))\in f(i)(1)\text{ for each }i\in\text{dom}(f)\}$. Then $$\mathscr B=\Big\{U_f:f\in\Big[\bigcup _{\alpha<\kappa} \{\alpha\}\times \mathscr B_\alpha \Big]^{<\omega}\Big\}$$ is a basis for $\prod _{\alpha<\kappa} X_\alpha$ with $\abs{\mathscr B}\leq \kappa^{<\omega}=\kappa$. The inverse limit $X$ also has a basis of size $\leq\kappa$, namely $\{B\cap X:B\in \mathcal B\}$, because it is a subspace of the product.

Define $\varphi :\mathcal P (\mathscr B) \to \tau$ by $\varphi(\mathscr U)=X\cap \bigcup \mathscr U$, where $\tau$ is the topology of $X$. If $U\in\tau$ then letting $\mathscr U=\{B\in\mathscr B:B\cap X\subseteq U\}$ we have $\varphi(\mathscr U)=U$, so that $\varphi$ is surjective. Therefore $\abs{\{X\setminus U:U\in\tau\}}=\abs{\tau}\leq \abs{\mathcal P (\mathscr B)}\leq 2^\kappa$.\end{proof}

%\begin{ut}For each infinite cardinal $\kappa$ there is a completely regular widely-connected space of size $2^\kappa$.\end{ut}  

\begin{proof}[\textbf{\textit{Proof of Theorem.}}]Let $\kappa$ be given. By Theorem 2 of \cite{2}, there is an indecomposable continuum $M$ which has $2^\kappa$ composants.  The continuum $M$ is constructed as an inverse limit of $\kappa$-many continua, each of which has a basis of size $\leq \kappa$. To be more specific, $M=\varprojlim \{M_\alpha:\alpha<\kappa\}$ where $M_\alpha$ is a subcontinuum of   $[0,1]^{\alpha+1}$ for each $\alpha<\kappa$ (see the second and third paragraphs of the proof in \cite{2}  for the successor and limit cases of $\alpha$, respectively). By Lemma \ref{r}, $|\mathscr S(M)|\leq 2^\kappa =|\mathscr C(M)|$. By Lemma \ref{dwc}, $M$ contains a widely-connected space of size $2^\kappa$.\end{proof}

Mazurkiewicz \cite{mazur} proved that every metric indecomposable continuum has $\mathfrak c=2^\omega$ composants. Thus, if $X$ is a metric indecomposable continuum then $\mathscr C (X)=\mathscr S (X)$, and so by Lemma \ref{dwc} $X$ has a dense widely-connected subset.  This technique does not apply in general, as there are (non-metric) indecomposable continua with very few composants.  The Stone-\v{C}ech remainder of $[0,\infty)$ is consistently an indecomposable continuum with only one composant \cite{miod}, and Bellamy \cite{one} constructed indecomposable continua with one and two composants in ZFC.

\begin{uq}Does every indecomposable continuum have a dense widely-connected subset? \end{uq}
%every indecomposable continuum contains a metric indecomposable continuum.

\end{document}